\newtheorem{theorem}{Theorem}[section]
\newtheorem{corollary}[theorem]{Corollary}
\newtheorem{lemma}[theorem]{Lemma}
\newtheorem{definition}[theorem]{Definition}
\numberwithin{equation}{section}
\begin{document}

\markboth{Simon King, Sergei Matveev, Vladimir Tarkaev,  Vladimir Turaev}
{Dijkgraaf-Witten $Z_2$-invariants for  Seifert manifolds}


\title{DIJKGRAAF-WITTEN $Z_2$-INVARIANTS FOR  SEIFERT MANIFOLDS}

\author{SIMON KING}

\address{Faculty of Mathematics and Natural Science\\
  Institute of Mathematics Education\\  
  Gronewaldstr. 2\\  
   D-50931 Cologne Germany \\
   simon.king@uni-koeln.de}
   
\author{SERGEI MATVEEV}

\address{Laboratory of Quantum Topology \\
  Chelyabinsk State University   \\ 
   Brat'ev Kashirinykh street\\
129,   Chelyabinsk 454001  Russia \\
 and\\
  Krasovsky Institute of Mathematics and Mechanics of  RAS\\
svmatveev@gmail.com}
  
  \author{VLADIMIR TARKAEV}

\address{Laboratory of Quantum Topology \\
  Chelyabinsk State University   \\ 
   Brat'ev Kashirinykh street\\
129,   Chelyabinsk 454001  Russia \\
 and\\
  Krasovsky Institute of Mathematics and Mechanics of  RAS\\
v.tarkaev@gmail.com}

  \author{VLADIMIR TURAEV}

\address{Department of Mathematics Indiana\\
University Bloomington IN47405, USA \\
and\\ Laboratory of Quantum Topology\\
Chelyabinsk State University, \\
   Brat'ev Kashirinykh street\\
129,   Chelyabinsk 454001  Russia\\ 
vturaev@yahoo.com}

\maketitle

\begin{abstract}
In this short paper we compute the values of  Dijkgraaf-Witten invariants over $Z_2$  for all orientable Seifert manifolds with orientable bases. 

\noindent
{Mathematics Subject Classification 2000: 57M25, 57M27}
\end{abstract}

\keywords{ Seifert manifold, Dijkgraaf-Witten invariant }

\section{The Dijkgraaf-Witten invariants}

In 1990, Dijkgraaf and Witten \cite {DW} proposed a new
approach to constructing invariants of closed topological manifolds. Each   DW-invariant of a closed oriented  $n$-dimensional manifold $M$ is determined by a choice of a finite group $G$, a subgroup $U$ of the unitary group $ U[1]$,  and an element $h$ of the cohomology group $H^n(B; U$), where $B=B(G)$ is the classifying space of $G$. Let $S=S(M,B)$ be the set of all base point preserving   maps $M\to B$ considered up to base point preserving homotopy. The set~$S$ is finite and can be identified with the set of  homomorphisms $\pi_1(M) \to G$.
  
\begin{definition}
\emph{The Dijkgraaf-Witten invariant
of $M$ associated with~$h$} is the complex  number
\begin{center}$Z(M,h) = \frac{1}{|G|}\sum_{f\in S(M,B)}  \langle
     f^*(h),[M]\rangle,$ 
\end{center}
where $|G|$ is the order of $G$ and $[M]$ is the fundamental class of $M$.
\end{definition}
 
In this paper, we consider only the special case
where n = 3, both groups $G$ and $U$ have order 2 and
are identified with the group $\mathbb{Z}_2$. For the classifying
space  of  $\mathbb{Z}_2$ we   take the infinite-dimensional projective space
$RP^\infty$, and for $h$ we take a unique nontrivial element $\alpha^3 \in  H^3(RP^\infty;  \mathbb{Z}_2 )$, where $\alpha$ is the generator of $ H^1(RP^\infty;  \mathbb{Z}_2 )$. In this situation, the value $\langle f^*(h),[M]\rangle$ belongs to $\mathbb{Z}_2$, and the formula given above takes the form

\begin{center}
$Z(M,\alpha^3) = \frac{1}{2}\sum_{f\in S(M,B)} (-1)^{ \langle
     f^*(\alpha^3),[M]\rangle,}$ 
\end{center}

\noindent and becomes applicable to nonorientable manifolds. If
$H^1(M;  \mathbb{Z}_2 ) = 0$, then $Z(M,\alpha^3) = \frac{1}{2}$. In all other cases
$Z(M,\alpha^3$) is an integer.

\section{Quadratic function and Arf-invariant}

Let $ M $ be a  closed 3-manifold. We define a quadratic function $ Q_M
\colon H ^ 1 (M; \mathbb {Z} _2) \to \mathbb{Z}_2 $ by the  rule $ Q_M (x) =
\langle x ^ 3, [M] \rangle $, where $ x\in H ^ 1(M; \mathbb {Z} _2)$, $[M]$  is the  fundamental class of  $ M $, and $ x ^ 3\in H ^ 3 (M;
\mathbb {Z}_2) $ is the cube of  $ x $ in the sense of multiplication in
cohomology. The corresponding pairing  $\ell_M \colon H ^ 1 (M; \mathbb {Z} _2) \times H ^ 1 (M; \mathbb {Z} _2) \to \mathbb{Z}_2 $ defined by $\ell_M (x,y)= Q_M (x+y)- Q_M (x)- Q_M (y)$ is bilinear. 

The following relation between the DW-invariant of $M$ and the Arf-invariant of $Q_M$ was discovered in [MT].

\begin{theorem}\label{th1} \cite{MT}.
Let M be a closed connected 3-manifold,
and let $A \subset H^1(M;\mathbb {Z} _2) $ be the annihilator of $\ell_M$.  If there exists
 $x \in  A$  such that $x^3\neq  0$ , then $Z(M,\alpha^3) = 0$. If there
are no such elements, then $Z(M,\alpha^3) = 2^{k + m - 1}(-1)^{{\rm Arf} (Q_M)}$,
where $m$ is the dimension of $A$ and $k$ equals half the
dimension of the coset space $H^1(M; \mathbb {Z} _2)/A$.
\end{theorem}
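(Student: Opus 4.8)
The plan is to reduce $Z(M,\alpha^3)$ to an elementary Gauss sum for the quadratic function $Q_M$, and then to invoke the classical relation between such sums and the Arf invariant. First I would rewrite the defining formula. Since $B=RP^\infty=K(\mathbb{Z}_2,1)$, the bijection of $S(M,B)$ with $\mathrm{Hom}(\pi_1(M),\mathbb{Z}_2)=H^1(M;\mathbb{Z}_2)$ recalled above is realised by $f\mapsto x:=f^*(\alpha)$; as $f^*$ is a ring homomorphism, $\langle f^*(\alpha^3),[M]\rangle=\langle x^3,[M]\rangle=Q_M(x)$, and therefore, writing $V:=H^1(M;\mathbb{Z}_2)$,
\[
Z(M,\alpha^3)=\frac12\sum_{x\in V}(-1)^{Q_M(x)} .
\]
Everything else is algebra about the quadratic function $Q:=Q_M$ on the finite-dimensional $\mathbb{Z}_2$-space $V$, with polarization $\ell:=\ell_M$ and radical $A$; I will also use that $H^3(M;\mathbb{Z}_2)\cong\mathbb{Z}_2$, so that for $x\in A$ the condition $x^3\neq0$ means exactly $Q(x)=1$.

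The key observation is that $Q|_A$ is linear: for $a,b\in A$ we have $Q(a+b)=Q(a)+Q(b)+\ell(a,b)=Q(a)+Q(b)$. Thus the first alternative of the theorem is precisely the case $Q|_A\neq0$. In that case I would choose $x_0\in A$ with $Q(x_0)=1$; since $\ell(x,x_0)=0$ for all $x$, the fixed-point-free involution $x\mapsto x+x_0$ of $V$ satisfies $Q(x+x_0)=Q(x)+Q(x_0)+\ell(x,x_0)=Q(x)+1$, so it pairs the summands of $\sum_x(-1)^{Q(x)}$ into sign-opposite pairs and the sum vanishes, giving $Z(M,\alpha^3)=0$.

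In the remaining case $Q|_A=0$. Since $Q(0)=0$, the polarization $\ell$ is alternating ($\ell(x,x)=Q(0)-2Q(x)=0$), hence induces a nondegenerate symplectic form on $V/A$; in particular $\dim(V/A)=2k$ is even, which explains why $k$ is an integer. Because $Q|_A=0$ and $A$ is the radical of $\ell$, the form $Q$ descends to a nondegenerate quadratic form $\overline Q$ on $V/A$, whose Arf invariant is by convention $\mathrm{Arf}(Q_M)$. Choosing a linear complement $W$ of $A$ in $V$ and writing $x=a+w$ with $a\in A$ and $w\in W$ gives $Q(a+w)=Q(a)+Q(w)+\ell(a,w)=Q(w)$, so, with $m=\dim A$,
\[
\sum_{x\in V}(-1)^{Q(x)}=2^{m}\sum_{w\in W}(-1)^{Q(w)},
\]
where $Q|_W$ is nondegenerate of rank $2k$ and isomorphic to $\overline Q$. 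Finally I would apply the classical identity $\sum_{w}(-1)^{q(w)}=2^{k}(-1)^{\mathrm{Arf}(q)}$ for a nondegenerate quadratic form $q$ on a $2k$-dimensional $\mathbb{Z}_2$-space, proved by induction on $k$ using the orthogonal decomposition of $q$ into hyperbolic planes and at most one anisotropic plane $x^2+xy+y^2$ (the Gauss sum is multiplicative and $\mathrm{Arf}$ additive under orthogonal sum, with respective values $+2,0$ on the hyperbolic plane and $-2,1$ on the anisotropic one). This yields $\sum_{x\in V}(-1)^{Q(x)}=2^{m+k}(-1)^{\mathrm{Arf}(Q_M)}$, hence $Z(M,\alpha^3)=2^{m+k-1}(-1)^{\mathrm{Arf}(Q_M)}$.

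The argument is short and no single step is a serious obstacle; the one genuinely nontrivial input is the Gauss-sum identity of the last step, and the real care lies in the bookkeeping — checking that the conventions for $\mathrm{Arf}$ of a degenerate form, for the integer $k$, and for the identification $S(M,B)\cong H^1(M;\mathbb{Z}_2)$ all agree with those implicit in the statement.
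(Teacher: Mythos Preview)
Your argument is correct and self-contained. Note, however, that the paper does not actually prove this theorem: it is quoted from \cite{MT} with only a citation and no proof, so there is nothing in the paper to compare your approach against. Your reduction of $Z(M,\alpha^3)$ to the Gauss sum $\tfrac12\sum_{x}(-1)^{Q_M(x)}$ and the subsequent elementary analysis (linearity of $Q_M|_A$, the sign-reversing involution when $Q_M|_A\neq 0$, descent to $V/A$ when $Q_M|_A=0$, and the classical Gauss-sum/Arf identity) is exactly the standard route and is presumably close to what \cite{MT} does.
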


Note that this theorem is true for orientable and   non-orientable 3-manifolds. However, it follows from the Postnikov Theorem \cite{Po} that if $M$ is orientable, then the annihilator $A$ of $\ell_M$ coincides with $H^1(M; \mathbb {Z} _2)$. Further on we will consider only orientable 3-manifolds. For brevity we will call an element $x\in H^1(M; \mathbb {Z} _2)$  {\em essential} if $x^3\neq 0$.

\begin{corollary}
Let $M$ be an orientable  closed connected 3-manifold. If there exists an essential
 $x \in  H^1(M; \mathbb {Z} _2)$, then $Z(M,\alpha^3) = 0$. If there are no such elements, then $Z(M,\alpha^3) = 2^{m - 1}$, where $m$ is the dimension of $H^1(M; \mathbb {Z} _2)$.
\end{corollary}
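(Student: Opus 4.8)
The plan is to derive the Corollary directly from Theorem~\ref{th1} by specializing to the orientable case, where the Postnikov Theorem forces the annihilator $A$ of $\ell_M$ to be all of $H^1(M;\mathbb{Z}_2)$. First I would invoke the remark following Theorem~\ref{th1}: for orientable $M$ the pairing $\ell_M$ vanishes identically, so $A=H^1(M;\mathbb{Z}_2)$. This immediately identifies the dimension $m$ in the theorem with $\dim H^1(M;\mathbb{Z}_2)$, which is exactly the $m$ in the Corollary's statement.

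Next I would handle the two cases of Theorem~\ref{th1} in turn. Since $A=H^1(M;\mathbb{Z}_2)$, the hypothesis ``there exists $x\in A$ with $x^3\neq 0$'' becomes simply ``there exists an essential $x\in H^1(M;\mathbb{Z}_2)$''; in that case Theorem~\ref{th1} gives $Z(M,\alpha^3)=0$, which is the first assertion. If no essential element exists, then the coset space $H^1(M;\mathbb{Z}_2)/A$ is trivial, so its dimension is $0$ and hence $k=0$. Substituting $k=0$ into the formula $Z(M,\alpha^3)=2^{k+m-1}(-1)^{\operatorname{Arf}(Q_M)}$ yields $Z(M,\alpha^3)=2^{m-1}(-1)^{\operatorname{Arf}(Q_M)}$.

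The one remaining point is to explain why the Arf-invariant contribution disappears, i.e. why $\operatorname{Arf}(Q_M)=0$ in the no-essential-element case. Here I would observe that when $A=H^1(M;\mathbb{Z}_2)$ the associated pairing $\ell_M$ is identically zero, so $Q_M$ is in fact a \emph{linear} functional $H^1(M;\mathbb{Z}_2)\to\mathbb{Z}_2$ (since $Q_M(x+y)-Q_M(x)-Q_M(y)=\ell_M(x,y)=0$). But if additionally $x^3=0$ for every $x$, then $Q_M(x)=\langle x^3,[M]\rangle=0$ for all $x$, so $Q_M$ is the zero function, whose Arf-invariant is $0$. Hence $(-1)^{\operatorname{Arf}(Q_M)}=1$ and $Z(M,\alpha^3)=2^{m-1}$, completing the proof.

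I do not expect a serious obstacle here: the Corollary is essentially a bookkeeping consequence of Theorem~\ref{th1} together with the stated Postnikov fact. The only place requiring a moment's care is confirming that the vanishing of all cubes $x^3$ genuinely kills the Arf term rather than leaving a $\pm$ ambiguity; this is resolved by noting that $Q_M$ itself is then identically zero, not merely its Arf-invariant. (Strictly, one should also note that Arf is well-defined here because the relevant quadratic refinement is with respect to the zero pairing, for which every function with $Q_M(0)=0$ is ``quadratic''; the convention $\operatorname{Arf}=0$ for the zero form is the one used in Theorem~\ref{th1}.)
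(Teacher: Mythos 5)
Your derivation is correct and is exactly the route the paper intends: the Corollary is stated as an immediate specialization of Theorem~\ref{th1} using the Postnikov remark that $A=H^1(M;\mathbb{Z}_2)$ for orientable $M$, so $k=0$ and $m=\dim H^1(M;\mathbb{Z}_2)$. Your extra observation that the absence of essential elements forces $Q_M\equiv 0$ (hence $\operatorname{Arf}(Q_M)=0$) is the right way to dispose of the sign, and fills in the one step the paper leaves implicit.
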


In view of  the above corollary, the following question is   crucial for computing DW-invariants: given a 3-manifold $M$, does   $ H^1(M; \mathbb {Z}_2)$ contain an essential element? In general, the calculation of products in cohomology and calculation of DW-invariants  is quite cumbersome, see \cite{Wa,Br,Ha}. We prefer  a very elementary method based on a nice structure of skeletons of Seifert manifolds. For simplicity, we restrict ourselves to Seifert manifolds fibered over $S^2$,  although Theorems 2 -- 4  remain  true for Seifert manifolds fibered over any closed    orientable surface. Proofs are the same.

\section{Skeletons of Seifert manifolds. }\label{4}

\begin{definition} Let $M$ be a closed 3-manifold. A 2-dimensional polyhedron  $P\subset M$ is called  {\em a skeleton of    $M$} if $M\setminus P$ consists of open 3-balls. 
\end{definition}

 Let us construct a skeleton of an orientable Seifert manifold $M=(S^2; (p_i,q_i), 1\leq i\leq n )$ fibered over $S^2$ with exceptional fibers of types $(p_i,q_i)$. Represent $S^2$ as a union of two discs $\Delta, D $ with common boundary.  Choose inside $\Delta$ disjoint discs $\delta_i, 1\leq i \leq n$, and remove their interiors. The resulting punctured disc  we denote  $\Delta_0$. Then we  join  the circles $\partial \delta_i$     with $\partial D$  by disjoint arcs    $l_i\subset  \Delta_0 , 1\leq  i \leq n$. The skeleton $P\subset M$ we are looking for is the union of the following surfaces, see Fig.\ref{Fig2}.

\begin{enumerate}
\item Annuli  $L_i = l_i\times  S^1$ and tori $t_i=\partial \delta_i \times  S^1   , 1\leq i \leq n$.

\item The torus  $T=\partial D\times S^1$, the punctured  disc $\Delta_0$,  and the disc $D$.

\item Discs  $d_i$ attached to   $t_i$ along  simple closed curves in $t_i$ of  types   $(p_i, q_i)$. 
\end{enumerate}  

Note that the surfaces of the first two types  lie in $S^2 \times S^1$ while  $d_i$ do not.  

\begin{figure}[th]
\centerline{\psfig{figure=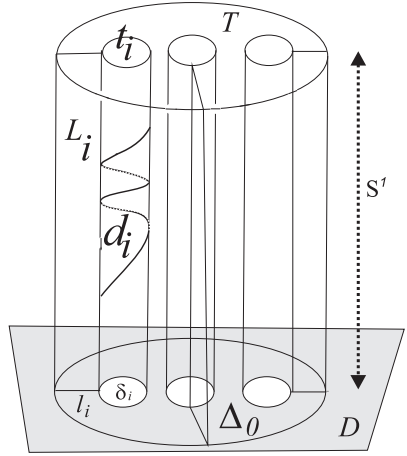,height=6cm}}
\caption{A skeleton of $M$. \label{Fig2}}
\end{figure}

\section{Seifert manifolds having $Z(M,\alpha^3)=0$} \label{S}
In this section we describe  all orientable Seifert manifolds fibered over $S^2$ whose first cohomology group contains an essential $x$.
Note that by Poincar\'e duality    $x \in H^1(M; \mathbb {Z}_2)$ is essential 
 if and only if its dual $x_\ast \in H_2(M; \mathbb {Z}_2)$           is essential in the sense that it can be realized by an odd 
 surface, i.e. by a  closed  surface having an odd Euler
 characteristic. So instead of looking for essential 1-cocycles we will construct essential 2-cycles.

Let $M=(S^2; (p_i,q_i), 1\leq i\leq n )$ be an orientable Seifert manifold fibered over $S^2$ with exceptional fibers of types 
$(p_i,q_i)$. 

\begin{theorem} 
\label{th2} Suppose that $M$ contains  an  exceptional fiber  $f_i $   of type  $(p_i,q_i)$  and an  exceptional fiber  $f_j$   of type  $(p_j,q_j)$ such that  $p_i$ is divisible by 4 while $p_j$ is even but not divisible by 4. Then there is an essential  $x\in H^1(M;\mathbb{Z}_2)$.
\end{theorem}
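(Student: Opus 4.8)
The plan is to construct an essential 2-cycle in $H_2(M;\mathbb{Z}_2)$ explicitly out of pieces of the skeleton $P$ described in Section~\ref{4}, and then invoke the Poincaré duality reformulation: $x\in H^1(M;\mathbb{Z}_2)$ is essential if and only if its dual in $H_2(M;\mathbb{Z}_2)$ is represented by a closed surface of odd Euler characteristic. So the real task is to find, among the surfaces $L_k$, $t_k$, $T$, $\Delta_0$, $D$, $d_k$ and their $\mathbb{Z}_2$-combinations, a closed surface $F$ that (a) is a cycle, i.e.\ a genuine closed surface in $M$, and (b) has $\chi(F)$ odd, and then to check that $F$ is in fact essential, i.e.\ that $x^3\ne 0$ for the dual class $x$. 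Because we only care about $\mathbb{Z}_2$-coefficients, "cycle" just means that the 1-dimensional strata where several sheets meet are incident to an even number of sheets.

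First I would set up coordinates on the building blocks near the two distinguished exceptional fibers $f_i$ and $f_j$. Over $\delta_i$ the solid torus is glued so that the meridian disc $d_i$ meets the torus $t_i$ in a curve of slope $(p_i,q_i)$; in $\mathbb{Z}_2$-homology of $t_i$ this curve is $p_i\mu_i + q_i\lambda_i \equiv q_i\lambda_i$ since $p_i$ is even, where $\mu_i$ is the fiber class and $\lambda_i$ the section class. The analogous statement holds at $j$. The key numerical input is the hypothesis: $p_i\equiv 0\pmod 4$ and $p_j\equiv 2\pmod 4$, so $q_i,q_j$ are both odd. The idea is to form $F$ from two copies of the vertical annuli/tori running between $\delta_i$, $\delta_j$ and $\partial D$, capped off appropriately, so that $F$ is closed and $\chi(F)$ picks up a contribution from the capping discs that is controlled by $p_i$ and $p_j$ modulo $4$. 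Concretely I expect $F$ to consist of the annulus $L_i$, the annulus $L_j$, part of the torus $T$ and of the tori $t_i,t_j$, together with copies of the $2$-discs bounded on $t_i$ (resp.\ $t_j$) by the fiber curve — the point being that a fiber curve bounds a disc on the solid-torus side precisely because $p_i$ (hence the fiber) appears with the right parity. Taking an even number of these so that all intersection strata cancel in $\mathbb{Z}_2$ yields a closed surface, and a careful Euler-characteristic count of the discs and the annuli/tori (tori and annuli contribute $0$, each disc contributes $1$) gives $\chi(F)$ congruent to something like $p_i/? + p_j/? \equiv 0+1 \pmod 2$ — odd — exactly when one of $p_i,p_j$ is $\equiv 2$ and the other $\equiv 0 \pmod 4$.

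The hard part, and the step I would spend the most care on, is the last one: verifying that the class $x$ dual to $F$ actually satisfies $x^3\ne 0$, rather than merely that $F$ has odd Euler characteristic. In general odd Euler characteristic of a representing surface is \emph{necessary} but one must argue it is also \emph{sufficient} here, i.e.\ that for this particular $M$ there is no "accidental" vanishing of $x^3$. I would handle this by using the identity $\langle x^3,[M]\rangle = \langle x, x_*\cdot x_*\rangle$ together with the fact (Poincaré duality again) that the self-intersection $x_*\cdot x_* \in H_1(M;\mathbb{Z}_2)$ of the surface $F$ is represented by a curve whose normal line bundle is the restriction of the normal bundle of $F$, and then compute $\langle x, \cdot\rangle$ on that curve by counting intersections with $F$ — equivalently, $x^3\ne 0$ iff the mod-$2$ self-intersection curve of $F$ meets $F$ an odd number of times, which unwinds to the oddness of $\chi(F)$ by the standard Wu-type formula $\langle x^3,[M]\rangle \equiv \chi(F) \pmod 2$ for $F$ dual to $x$ in a $3$-manifold. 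Thus, modulo carefully justifying that normalization, constructing an odd surface suffices, and the construction above does the job. The remaining routine work is bookkeeping: checking the $\mathbb{Z}_2$-cycle condition stratum by stratum and the Euler-characteristic count, both of which are elementary given the explicit skeleton.
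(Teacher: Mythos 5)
Your overall strategy is the paper's: build an explicit closed surface of odd Euler characteristic out of pieces of the skeleton near the two fibers, and conclude essentiality of the Poincar\'e dual class from the congruence $\langle x^3,[M]\rangle\equiv\chi(F)\pmod 2$. Your discussion of that last congruence (via the self-intersection curve and the Wu formula $\langle w_1(F)^2,[F]\rangle=\chi(F)\bmod 2$) is sound and is exactly the fact the paper invokes, without proof, at the start of Section \ref{S}; you also correctly locate where the hypothesis enters, namely that $(p_i+p_j)/2=p_i/2+p_j/2$ is odd precisely when one of $p_i,p_j$ is $\equiv 0$ and the other $\equiv 2 \pmod 4$.

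However, the construction of $F$ itself is never actually carried out, and the one concrete mechanism you propose does not work. You suggest capping with ``discs bounded on $t_i$ by the fiber curve,'' but the fiber curve does not bound a disc in the exceptional solid torus; the only disc available there is the meridian disc $d_i$, whose boundary has slope $(p_i,q_i)$ on $t_i$. Likewise, ``taking an even number of copies so that the strata cancel'' is not how the pieces close up, and your Euler-characteristic count is left literally as ``$p_i/?+p_j/?$''. The missing idea is the following: take $P=L\cup t_i\cup t_j\cup d_i\cup d_j$, where $L=c\times S^1$ is a vertical annulus joining the two fiber neighborhoods; the curves $\partial d_i$ and $\lambda_i=\partial L\cap t_i$ meet in $p_i$ points and cut $t_i$ into $p_i$ quadrilaterals which, because $p_i$ is even, admit a chessboard $2$-coloring (and similarly on $t_j$). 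Deleting the $(p_i+p_j)/2$ white quadrilaterals leaves exactly two sheets along every singular edge, so the result $F$ is a closed surface, and $\chi(F)=\chi(P)-(p_i+p_j)/2=2-(p_i+p_j)/2$, which is odd under your hypothesis. Without this (or an equivalent) explicit closing-up device, the proof is incomplete: you have identified the right parity obstruction but not produced the cycle that realizes it.
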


\begin{proof} Choose disjoint discs $D_i,D_j \subset  S^2$ containing  projection points  of $f_i,f_j$, and join their boundaries by a simple arc $c \subset S^2$. Let   $P $ be the polyhedron in $M$ consisting of the annulus $L=c\times S^1$, two tori $t_i =\partial D_i \times S^1,  t_j =\partial D_j \times S^1$, and meridional  discs   $d_i, d_j \subset M$ of  solid tori  which replace    $D_i\times S^1$  and    $D_j\times S^1$ by the standard   construction of  $M$.   The boundary curves of $d_i, d_j$ are of types $(p_i,q_i)$, $(p_j,q_j)$, respectively.
See Fig. \ref{Fig.1}, to the left.

\begin{figure}[th]
\centerline{\psfig{figure=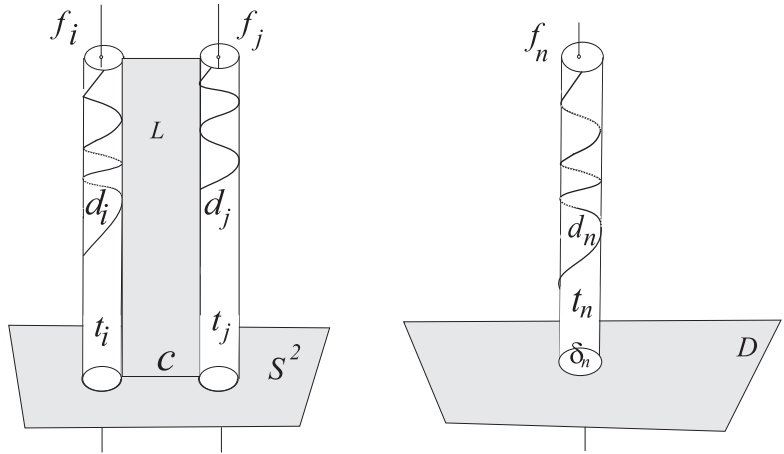,height=6cm}}
\caption{Construction of an odd surface. \label{Fig.1}}
\end{figure}

Note  that  $d_i$ can be chosen so that the circles   $\partial d_i$ and $\lambda_i=\partial L \cap t_i$  decompose    $t_i$ into  $p_i$ quadrilaterals admitting
 a black-white chessboard coloring. The same is true for $d_j$ and $t_j$. Let us remove from $P$ all $(p_i+p_j)/2 $    quadrilaterals having  the same (say, white) color. We get a closed surface  $F \subset M $. Since the Euler characteristic $\chi(P) $ of $P$ is even and $(p_i+p_j)/2 $ is odd,   $\chi(F) $ is odd. Therefore $F$ is odd and thus represents an essential 2-cycle.
\end{proof}

\begin{theorem} \label{th3} Let $M=(S^2; (p_i,q_i), 1\leq i\leq n )$ be an orientable Seifert manifold fibered over $S^2$ with exceptional fibers of types 
$(p_i,q_i)$ such that
 (1) all $p_i$ are odd and (2)  
the number of odd $q_i$ is even.  
Denote by $Q_\ast$  the sum of all $q_i$, and by  $P_\ast$  any 
alternating sum of all $p_i$ for which
 the corresponding $q_i$ are odd.   
 Suppose that the integer number    $\xi(M)=(Q_\ast+P_\ast)/2$ is odd.
    Then there exists an essential  $x\in H^1(M;\mathbb{Z}_2)$.
\end{theorem}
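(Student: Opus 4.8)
The plan is to show that, assuming $\xi(M)$ is odd, $M$ contains a closed surface $F$ of odd Euler characteristic; by the discussion at the beginning of Section~\ref{S} such an $F$ represents an essential $2$-cycle, so its Poincar\'e dual $x\in H^1(M;\mathbb{Z}_2)$ satisfies $x^{3}\neq 0$. It is worth noting at the outset what this is really saying: under the present hypotheses the standard presentation of $H_1(M;\mathbb{Z}_2)$ from the Seifert data, with generators $x_i$ (exceptional fibers) and $h$ (regular fiber), reduces to $x_i\equiv q_i h$ for each $i$ (because every $p_i$ is odd), and the remaining relation $\sum_i x_i\equiv 0$ is vacuous (because the number of odd $q_i$ is even); hence $H^1(M;\mathbb{Z}_2)\cong\mathbb{Z}_2$ and the class we produce is the unique nonzero one. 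So the true content of the theorem is $\langle x^{3},[M]\rangle\equiv\xi(M)\pmod 2$ for that $x$; we shall not use this reformulation, but it explains why a single parity is the answer.

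The surface $F$ is built, in the spirit of the proof of Theorem~\ref{th2}, from the skeleton of Section~\ref{4}. Start with the horizontal strata $\Delta_0$ and $D$; together they form a sphere with $n$ holes lying in the complement of the solid tori $V_i$ that replace $\delta_i\times S^1$ in the construction of $M$, with boundary the curves $\mu_i=\partial\delta_i\times\{\mathrm{pt}\}$ of type $(1,0)$ on $t_i=\partial\delta_i\times S^1$. Each $\mu_i$ must be capped off using the meridional disc $d_i$ of $V_i$ and pieces of $t_i$. Since every $p_i$ is odd, on $t_i$ one has $(1,0)\equiv(p_i,q_i)\pmod 2$ exactly when $q_i$ is even; so for such a fiber we cap $\mu_i$ by $d_i$ together with one colour class of the chessboard colouring of $t_i$ cut by $\mu_i\cup\partial d_i$ (these curves meet in $|q_i|$ points), the colouring existing precisely because $[\mu_i]+[\partial d_i]=0$ in $H_1(t_i;\mathbb{Z}_2)$. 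When $q_i$ is odd one must in addition absorb a fiber $\lambda_i$ of type $(0,1)$, since then $(1,0)\equiv(p_i,q_i)+(0,1)\pmod 2$; as there is an even number of such fibers, one pairs them up and, for each pair $\{i,j\}$, modifies the surface along an arc joining $\delta_i$ to $\delta_j$ (using the vertical strata $L_i,L_j$ and a piece of $T$) so as to transport a fiber's worth of defect from one to the other and cancel both, capping $\mu_i$ and $\mu_j$ by $d_i,d_j$ and colour classes of $t_i,t_j$ now cut by the three curves $\mu_i,\partial d_i,\lambda_i$ (respectively $\mu_j,\partial d_j,\lambda_j$) arranged with one triple point, an arrangement having $p_i+q_i$ regions and hence admitting a chessboard colouring.

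One then makes all the colourings and all the choices of ``side'' coherent, so that exactly two $2$-strata meet along every $1$-stratum — this forces everything except the overall phase of each colouring, and the phase has to be propagated across each connecting arc. A count of cells shows that $\chi(F)$ is congruent mod $2$ to one half of the total number of torus regions used: the even-$q$ fibers contribute $\tfrac12|q_i|$, and for a pair of odd-$q$ fibers the contributions are $\tfrac12(p_i+q_i)$ and $\tfrac12(p_j+q_j)$, with the connecting arc forcing the $p$-terms of a pair to enter with opposite signs. Collecting the $p_i$-terms from the odd-$q$ fibers into an alternating sum $P_*$ (the alternation being exactly this end-reversal) yields $\chi(F)\equiv\tfrac12\bigl(\sum_i q_i+P_*\bigr)=\xi(M)\pmod 2$, so $F$ is odd precisely when $\xi(M)$ is odd, which proves the theorem.

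The combinatorial core of the last two paragraphs is where I expect the work to lie: verifying rigorously that the assembled $F$ is a manifold along every $1$-stratum (this is what dictates which regions and which sides are kept), carrying out the cell count so that the total collapses exactly to $(Q_*+P_*)/2$ and not to some congruent-looking but inequivalent quantity, pinning down the alternating-sign mechanism along the connecting arcs, and checking that the parity of the answer is independent of the pairing of the odd-$q$ fibers, of the connecting arcs and of all colour choices — equivalently, that any two admissible surfaces $F$ differ by a mod-$2$ boundary in $M$. A handful of degenerate cases ($q_i=0$, $p_i=1$, or $n\le 1$) should be inspected separately.
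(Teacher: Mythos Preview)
Your outline is sound and would work, but it is genuinely different from the paper's argument. The paper does \emph{not} build the surface directly from the given presentation. Instead it first observes that the ``parameter trade'' $(p_i,q_i),(p_j,q_j)\mapsto(p_i,q_i+p_i),(p_j,q_j-p_j)$ produces another Seifert presentation of the same $M$, still satisfying (1) and (2), and preserving the parity of $\xi(M)$. Iterating such trades one arranges that every $q_i$ with $i<n$ is divisible by $4$ and $q_n$ is even; for this normalized presentation $P_\ast=0$ and $Q_\ast\equiv q_n\pmod 4$, so the hypothesis reduces to $q_n/2$ odd. Now only one torus $t_n$ is needed: the surface is obtained from the disc $D=S^2\setminus\operatorname{Int}\delta_n$, the disc $d_n$, and one colour class of the $q_n$ quadrilaterals cut in $t_n$ by $\partial d_n\cup\partial\delta_n$, exactly as in Theorem~\ref{th2}. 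The Euler count is then a single term, $q_n/2$.

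What this buys is precisely the avoidance of everything you list in your last paragraph: there are no odd-$q$ fibers left, hence no connecting annuli, no three-curve configurations on tori, no pairing choices, and no alternating-sign mechanism to justify. Your direct construction is essentially the Case~2 analysis of Theorem~\ref{th4} run in reverse, and carrying it out rigorously would amount to redoing that argument; the parameter-trading reduction short-circuits all of it. Conversely, your route has the conceptual virtue of making visible that the class you build is the unique nonzero one and that $\langle x^3,[M]\rangle\equiv\xi(M)\pmod 2$ holds as an identity, not merely under the oddness hypothesis.
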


\begin{proof} Let us replace two exceptional fibers of types
$(p_i,q_i)$,   $(p_j,q_j)$ by  exceptional fibers of types
$(p_i,q_i+p_i)$,  $(p_j,q_j-p_j)$.   These  operations (called  
{\it parameter trading}) preserve the parity of $\xi(M)$ and produce another Seifert presentation of 
$M$, which also satisfies assumptions (1) and (2) of Theorem
\ref{th3}. Using such operations  one can easily get  all $q_i$ be
divisible  by 4,   except the last one, say, $q_n$,  which
must be  even in view of assumption (2). For this new presentation of $M$ we have $Q_\ast\equiv q_n$ {\rm mod 4} (since all other $q_i$ are divisible by 4), and $P_\ast =0$ (since there are no odd $q_i$). Taking into account that  $\xi(M)$ is odd, we may conclude that $q_n/2$ is odd.  
Let   $P$ be a skeleton of $M$  constructed for   this new presentation of $M$. Consider the union $P_0$ of the following  2-components of $P$, see Fig. \ref{Fig.1}, to the right: 
  
\begin{enumerate} 
\item The torus $t_n$ and  disc  $d_n$ attached to   $t_n$  along a simple closed curve  of type     $(p_n, q_n)$;
\item The   disc $D =S^2 \setminus {\rm Int} (\delta_n)$, where $\delta_n \subset S^2$ is the meridional disc of $t_n$.
\end{enumerate} 
  
   Then we apply the same trick as in the proof of Theorem \ref{th2} using the circle   $\partial d_n$ and taking the circle $\partial \delta_n$ instead of $\lambda_n$. 
   
   Since  $q_n$ is even,  the circles   $\partial d_n$ and $\partial\delta_n $   can be chosen so that  they     decompose    $t_n$ into  $q_n$ quadrilaterals admitting
 a black-white chessboard coloring.  Let us remove from $P_0$  all $  q_n/2 $    quadrilaterals having  the same (say, white) color. We get a closed surface  $F \subset M $. Since the Euler characteristic   of $P_0$ is even and $q_n/2 = \xi(M)  $ is odd,   $\chi(F) $ is odd. Therefore $F$ represents an essential 2-cycle.
\end{proof}

Let us introduce two classes $\mathcal{A}, \mathcal{B}$  of Seifert manifolds. Class  $\mathcal{A}$  consists of manifolds satisfying assumptions of Theorem \ref{th2},  class $\mathcal{B}$ consists of manifolds satisfying assumptions of Theorem \ref{th3}.

\begin{theorem} \label{th4}
 Let $M$ be   an orientable Seifert manifold fibered over $S^2$ with exceptional fibers of types
$(p_i,q_i), 1\leq  i \leq n$. Suppose that  $M$ contains an essential 2-cycle $x_\ast$.
Then $M$ belongs  either to $\mathcal{A}$  or to $\mathcal{B}$ 
\end{theorem}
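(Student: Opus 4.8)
The plan is to prove the converse of Theorems \ref{th2} and \ref{th3} by analyzing which $2$-cycles in $H_2(M;\mathbb{Z}_2)$ can be represented by odd surfaces, using the skeleton $P$ constructed in Section~\ref{4}. First I would recall that every class in $H_2(M;\mathbb{Z}_2)$ is represented by a subcomplex of the skeleton $P$ obtained by deleting some collection of the open $3$-balls $M\setminus P$ and then discarding the faces of $P$ that meet exactly one of the deleted balls; equivalently, a $2$-cycle $x_\ast$ corresponds to a $\mathbb{Z}_2$-linear combination of the closed surfaces swept out by the $2$-strata of $P$. So I would set up the chain complex of $P$ with its cellular structure coming from the annuli $L_i$, the tori $t_i$ and $T$, the punctured disc $\Delta_0$, the disc $D$, and the meridional discs $d_i$, and compute $H_2$ together with the Euler characteristic of an arbitrary representing surface $F$ as a function of which strata are included.

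The key computational step is to express $\chi(F)\bmod 2$ as an explicit affine function of the discrete data $(p_i,q_i)$ and the choice of cycle. Each exceptional torus $t_i$ with its meridian curve and the curve $\partial d_i$ of slope $(p_i,q_i)$ contributes, via the chessboard trick of Theorem~\ref{th2}, a number of quadrilaterals whose parity is governed by $p_i$ and $q_i$; when $p_i$ is odd I would use parameter trading (as in the proof of Theorem~\ref{th3}) to normalize the $q_i$, and when some $p_i$ is even I would track how the two cases $p_i\equiv 0\ (\mathrm{mod}\ 4)$ versus $p_i\equiv 2\ (\mathrm{mod}\ 4)$ affect the count. Carrying out the bookkeeping, the existence of an odd $F$ should reduce to the arithmetic condition: either there are two exceptional fibers with $p_i\equiv 0$ and $p_j\equiv 2\ (\mathrm{mod}\ 4)$ — putting $M$ in $\mathcal{A}$ — or all $p_i$ are odd, the number of odd $q_i$ is even (an orientability/homological constraint on $M$), and the half-integer $\xi(M)=(Q_\ast+P_\ast)/2$ is odd — putting $M$ in $\mathcal{B}$.

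More precisely, I would organize the argument by cases on the parities of the $p_i$. If some $p_i$ is even: if two of them have different residues mod $4$ we are in $\mathcal{A}$ and done; if all even $p_i$ are $\equiv 0\ (\mathrm{mod}\ 4)$, or all $\equiv 2\ (\mathrm{mod}\ 4)$, I must show no essential $2$-cycle exists, by checking that every candidate surface built from $P$ has even Euler characteristic — here the divisibility by $4$ (respectively the pairing up of the $p_i\equiv 2$ contributions) forces the count to be even. If all $p_i$ are odd, the surface $F$ must use an even number of the exceptional discs $d_i$ to close up (this is where the "number of odd $q_i$ is even" constraint enters, as a necessary condition for $M$ to even have the relevant homology), and then $\chi(F)\bmod 2$ is computed to equal $\xi(M)\bmod 2$, independently of the admissible choices; so an essential cycle exists precisely when $\xi(M)$ is odd, i.e.\ when $M\in\mathcal{B}$.

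I expect the main obstacle to be the "no essential cycle" direction: showing that \emph{every} $2$-cycle, not just the natural ones coming from single tori, has even Euler characteristic when the arithmetic conditions fail. This requires a clean description of $H_2(M;\mathbb{Z}_2)$ in terms of the skeleton and a uniform parity formula valid for all representatives — the subtlety being that different subcomplexes of $P$ representing the same homology class can have different Euler characteristics (they differ by boundaries), so I must verify that the \emph{parity} is a homological invariant on the relevant subspace, or else restrict to a canonical system of representatives. Once that parity formula is in hand, the matching of the surviving cases with the definitions of $\mathcal{A}$ and $\mathcal{B}$ is a finite check, with parameter trading handling the reduction to normalized presentations in the all-$p_i$-odd case.
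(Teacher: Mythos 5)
Your overall strategy --- represent the essential $2$-cycle by a subcomplex of the skeleton $P$ and extract arithmetic constraints from the parity of its Euler characteristic --- is the same as the paper's, but the proposal stops short of the two steps that actually carry the proof, and the case structure you choose makes the task harder than it needs to be. First, the obstacle you flag at the end (whether the parity of $\chi$ is well defined on a homology class) is not an obstacle once you observe that the carrier $B$ of a mod-$2$ cycle in $P$ is an \emph{embedded closed surface} (at each singular line of $P$ an even number of the adjacent $2$-components are black), and that for an embedded closed surface $F$ dual to $x$ one has $\chi(F)\equiv\langle x^3,[M]\rangle \pmod 2$ --- this is exactly the duality statement opening Section~\ref{S}. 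You leave this unresolved and instead contemplate canonical representatives, which is not needed. Second, and more seriously, you never explain how the parities of $p_i$ and $q_i$ are forced by the cycle condition. The mechanism is Lemma~\ref{44}: the cycle induces a chessboard coloring of the faces of the graph $\partial d_i\cup\lambda_i$ (resp.\ $\partial d_i\cup\partial\delta_i$) in the torus $t_i$, and intersecting with a parallel copy of the complementary curve shows that $q_i+1$ (resp.\ $p_i+1$) is even; a similar coloring of $T$ by the black annuli $L_i$ shows the number of odd $q_i$ is even. Without this or an equivalent device, your assertions that ``the count is forced to be even'' when the arithmetic conditions fail, and that the all-$p_i$-odd case yields $\chi(F)\equiv\xi(M)$, are precisely the content of the theorem and remain unproved.

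Moreover, your case division by the parities of the $p_i$ obliges you to prove \emph{non-existence} of essential cycles in the complementary cases, i.e.\ to examine every $2$-cycle of $P$. The paper avoids this by dividing instead according to whether the $2$-component $\Delta_0$ lies in the carrier $B$ of the \emph{given} cycle: if not, $B$ is a closed surface which (after passing to an odd connected component) consists of two annuli $L_i,L_j$, two discs $d_i,d_j$, an annulus in $T$, and $(p_i+p_j)/2$ quadrilaterals, whence $p_i,p_j$ are even with $p_i/2+p_j/2$ odd and $M\in\mathcal{A}$; if so, all $d_i$ are black, Lemma~\ref{44} gives that all $p_i$ are odd and the number of odd $q_i$ is even, and after parameter trading the count $\chi(B)\equiv q_n/2$ shows $\xi(M)$ is odd, so $M\in\mathcal{B}$. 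Your sketch identifies the right objects but defers all of this bookkeeping (``should reduce to,'' ``I expect''), so as written it is a plan rather than a proof.
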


In the proof of the theorem, we need the following self-evident lemma.
\begin{lemma} \label{44} Let $\tau$ be a torus and  $ \mathcal{C} $  a finite
collection of  simple closed curves in $\tau$ such that all their
intersection points are transverse. We will consider the union $G$
of these curves as a graph.
Suppose that the faces of $G$ have  a
black-white chess coloring  in the sense that   any edge of $G$ separates a black face from a white one.
Then any general position simple closed
curve in $\tau$ crosses the edges of $G$ at an even  number of
points.\qed
\end{lemma}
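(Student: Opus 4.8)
The plan is to prove the lemma by a direct ``walk along the curve'' argument, keeping track of the colour of the face that contains the travelling point. Write $\gamma$ for the given general position simple closed curve in $\tau$. Since $\gamma$ is in general position with respect to $G$, it meets $G$ in finitely many points, each an interior point of some edge of $G$ at which $\gamma$ crosses $G$ transversally; in particular $\gamma$ misses all vertices of $G$. Deleting these points cuts $\gamma$ into finitely many arcs, each lying in a single face of $G$. Reading these faces off cyclically along $\gamma$ produces a cyclic sequence $F_0, F_1, \dots, F_k = F_0$, where $k = |\gamma \cap G|$ is the number we want to show is even, and $F_s, F_{s+1}$ are the two faces bordering the edge crossed at the $s$-th intersection point.

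The one thing that needs to be said is why consecutive faces in this sequence always have opposite colours, and this is exactly where the hypothesis enters. By assumption the edge crossed at the $s$-th point separates a black face from a white one; the two faces it separates are precisely (the germs of) $F_s$ and $F_{s+1}$, so these two faces receive opposite colours. (Incidentally this also forces the two faces on the two sides of any edge to be distinct, since a single face cannot be both black and white.) Hence the colours of $F_0, F_1, \dots, F_k$ alternate, and the equality $F_k = F_0$ forces $k$ to be even; if $\gamma$ misses $G$ altogether then $k = 0$. I would add the remark that neither the torus structure of $\tau$ nor the simplicity of $\gamma$ is actually used, so the statement holds for an arbitrary closed surface and an arbitrary closed curve in general position — the torus case is all that is needed in the sequel.

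There is essentially no obstacle here: the argument is genuinely elementary, and the only care required is the general-position reduction and the observation above that the chessboard hypothesis is really what makes ``crossing an edge flips the colour'' literally true. If one prefers a homological phrasing, an alternative is to let $B \subset \tau$ be the closure of the union of the black faces; then $B$ is a compact subsurface of $\tau$ with $\partial B = G$ as a mod $2$ one-cycle (each edge borders one black and one white face, hence appears in $\partial B$ with coefficient $1$), so $[G] = 0$ in $H_1(\tau;\mathbb{Z}_2)$, and since the mod $2$ intersection number with $\gamma$ depends only on the homology class, $\gamma$ meets $G$ in $\gamma \cdot [G] = 0$ points mod $2$. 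I would present the hands-on version, since it matches the ``self-evident'' character of the lemma.
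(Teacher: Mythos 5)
Your proof is correct. The paper offers no proof at all --- the lemma is declared ``self-evident'' and stated with an immediate \verb|\qed| --- and your colour-alternation walk along $\gamma$ (or the homological variant, $[G]=\partial B=0$ in $H_1(\tau;\mathbb{Z}_2)$) is exactly the standard argument the authors leave implicit, so your write-up simply supplies the missing details correctly.
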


\begin{proof}[Proof of Theorem~\ref{th4}] Let $P$ be the  skeleton   of $M$  constructed    in  section \ref{4}. Denote by $\Gamma$ its
   singular graph consisting of triple and fourfold lines of $P$. The remaining part of $P$ consists
    of 2-components of $P$, that is,  of surfaces which are glued to $\Gamma$ along their
    boundary circles. Let  $B$ be the carrier of $x_\ast$, i.e. the union of  2-components of $P$
    which are   black  in the sense that they have  coefficient 1 in the 2-chain  representing $x_\ast$. All other 2-components are   white.

Case 1. Suppose that   $\Delta_0 $ and hence $D$ are white. Then the polyhedron $P_0=P\setminus \Delta_0$
is  simple   in the sense that the set of its singular points consists    of triple  lines and their crossing points. It follows that  $B $ is a closed  surface.
Since $x_\ast$ is essential, $\chi(B)$ is odd.  We may assume that $B$ is connected
 (if not then the Euler characteristic of at least one component of $B$ is odd, and we can take
it instead  of $B$). It follows that  $B$ contains   two
 annuli, say, $L_i, L_j$, discs  $d_i,d_j$, and  one of the two annuli into which the curves
 $  L_i \cap T $ and $L_j \cap T$ decompose the torus $T$. The total Euler characteristic of these 2-components is 0.

 Just as
 in the proof of Theorem \ref{th2} the circles $\partial
d_i$ and $\lambda_i= t_i\cap \partial L_i$ decompose $t_i$ into
$p_i$ quadrilateral 2-components. Since $x_\ast$ is a 2-cycle,
they are colored according to the chessboard rule. It follows that
$p_i$ and    $p_j$  are even. The remaining part of $B$
consists of $(p_i+p_j)/2$ black quadrilaterals in $t_i$ and $t_j$.
Since $\chi(B)$ is odd, we may conclude that one of $p_i/2, p_j/2
$ is even while the other is odd. Therefore $M\in \mathcal{A}$.

Case 2. Suppose that $\Delta_0$ is black. Then each  $d_i$ is  black. 
 Otherwise the boundary of $x_\ast $ would contain $\partial
\delta_i$, which is impossible since $x_\ast $ is a 2-cycle.

Let us prove that if $L_i$ is black then $q_i$ is odd.   Consider the graph $G=\partial d_i\cup \lambda_i \subset t_i$. As above, $x_\ast$ induces a  chess black-white  coloring of its faces. Then a parallel copy $\partial \delta'_i$ of $\partial \delta_i$ crosses $\lambda_i$ at one point and crosses $\partial d_i$ at $q_i$  points. By Lemma \ref{44} the number $q_i+1$ must be even, which means that $q_i$ is odd. Similarly,  if $L_i$ is white then $q_i$ is even.

 Let us prove that all $p_i$ are odd. Suppose that  $L_i$ is black. Consider the graph $G=\partial d_i\cup \partial \delta_i  \subset t_i$.   It decomposes $t_i$ into black-white colored faces. The coloring is induced by the 2-cycle $x_\ast$.  Let $\lambda'_i \subset t_i$ be a parallel copy    of $\lambda_i$. Then $\lambda'_i$  crosses $\partial d_i$ at $p_i$ points and crosses     $\partial \delta_i $ at  one point. By Lemma 1 the number $p_i+1$ must be even, which means that $p_i$ is odd. Suppose that  $L_i$ is white then $q_i$ is even. Therefore ,  $p_i$, being   coprime with $Q_i$, is odd.

Let us prove that the number of  black $L_i$  and hence the number of    odd $q_i$ are even.  This is because the boundary circles of black $L_i$ decompose $T$ into  black-white  colored  annuli such any two neighboring annuli have different colors.  Therefore  any meridional circle of $T$, for example, $\partial D$,  crosses  those boundary circles at an even number of points. 

Let us prove that $\xi(M)$ is odd. Just as in the proof of Theorem \ref{th3} we transform the  given Seifert presentation  of  $M$ into a new Seifert presentation such that all $q_i, 1\leq i\leq n-1,$ are divisible  by 4 and  $q_n$ is even. Then the  carrier $B$ of $x_\ast$  consists of the following black surfaces:
\begin{enumerate} 
\item      $ \Delta_0$ and  $D$;
 \item   All  discs $d_i, 1\leq i \leq  n $; 
 \item black quadrilaterals  contained in  $t_i, 1\leq i\leq n$. Each $t_i$ contains $q_i/2$  black quadrilaterals, where $(p_i,q_i)$  is the type of the corresponding $d_i$. Of course  $B$  may contain the torus  $T$, but it is      is homologically trivial and thus can be neglected.
\end{enumerate}
 
 Note that  $B$ has only triple singularities and thus is a closed surface. Since $x_\ast$ is   essential,  $\chi(B)$  is odd.  Now  we calculate  $\chi(B)$ by counting Euler characteristics of the above black surfaces.  
We get $\chi(B)=q_n/2$ {\rm mod} 2. It follows that  $q_n/2$ is odd. Since all $q_i$ are even,   $P_\ast =0$.   Taking into account that  all $q_i, 1\leq i\leq n-1$ are divisible by 4 and that $q_n/2$ is odd we may conclude that  $Q_\ast =(\sum_{i=1}^{n}q_i)/2 $ is  odd. Therefore, $\xi(M)$ is odd. It follows that  is in class $\mathcal{B}$.
\end{proof}

\section*{Acknowledgments}
S. Matveev, V. Tarkaev and V. Turaev were supported in part by the Laboratory of Quantum Topology, Chelyabinsk State University (contract no. 14.Z50.31.0020). S. Matveev and V. Tarkaev were supported in part by the Ministry of Education and Science of the Russia (the state task number 1.1260.2014/K) and the Russian Foundation for Basic Research (project no. 14-01-00441).

\end{document}